\title[A theorem of Montanucci and Zini and its application]{A theorem of Montanucci and Zini for generalized Artin--Mumford curves and its application to Galois points}
\author{Satoru Fukasawa}
\subjclass[2010]{14H37, 14H05}
\keywords{automorphism group, positive characteristic, Artin--Schreier curves}
\thanks{The author was partially supported by JSPS KAKENHI Grant Number JP19K03438}
\address{Department of Mathematical Sciences, Faculty of Science, Yamagata University, Kojirakawa-machi 1-4-12, Yamagata 990-8560, Japan}
\email{s.fukasawa@sci.kj.yamagata-u.ac.jp}
\newtheorem{thm}{Theorem}
\newtheorem{lem}{Lemma}
\newtheorem{cor}{Corollary}
\newtheorem{fact}{Fact}
\theoremstyle{definition}
\newtheorem{rem}{Remark}
\begin{document}
\begin{abstract}
An elementary proof of a theorem of Montanucci and Zini on the automorphism group of generalized Aritn--Schreier--Mumford curves is presented, with the argument of Korchm\'{a}ros and Montanucci for Artin--Schreier--Mumford curves being improved. 
Although the characteristic of a ground field is assumed to be {\it odd} in the article of Montanucci and Zini, the proof in the present article is applicable to the case of characteristic two also. 
As an application of the theorem of Montanucci and Zini, the arrangement of Galois points  or Galois lines for the generalized Artin--Schreier--Mumford curve is determined. 
\end{abstract}
\maketitle

\section{Introduction} 
The Artin--Schreier--Mumford (ASM) curve over an algebraically closed field $k$ of characteristic $p>0$ is the smooth model of the plane curve defined by 
$$ (x^{p^e}-x)(y^{p^e}-y)=c, $$
where $e>0$, $p^e >2$ and $c \in k \setminus \{0\}$.   
This curve is important in the study of the automorphism groups of algebraic curves, since this is an ordinary curve and its automorphism group is large compared to its genus (see \cite{subrao}). 
The ASM curve is generalized as the smooth model $X$ of the curve $C$ defined by
$$ L_1 (x)\cdot L_2(y)+c=0, $$
where $c \in k \setminus \{0\}$ and $L_1$ and $L_2$ are linearlized polynomials of degree $p^e$, that is, 
$$L_i=\alpha_{i e} x^{p^e}+\alpha_{i e-1}x^{p^{e-1}}+\cdots+\alpha_{i 0} x$$ 
for some $\alpha_{i e}, \alpha_{i e-1}, \ldots, \alpha_{i 0} \in k$ with $\alpha_{i e}\alpha_{i 0} \ne 0$, for $i=1, 2$. 
We can assume that $\alpha_{1 e}=\alpha_{2 e}=1$ for a suitable system of coordinates. 
This curve was studied by the present author \cite{fukasawa1, fukasawa2} (for the case $L_1=L_2$), and by Montanucci and Zini \cite{montanucci-zini}. 
The curve $X$ is called a {\it generalized Artin--Mumford curve} in \cite{montanucci-zini}. The automorphism group ${\rm Aut}(X)$ of $X$ is completely determined by Montanucci and Zini \cite[Theorems 1.1 and 1.2]{montanucci-zini}, as follows. 

\begin{fact} \label{montanucci and zini}
Assume that $p >2$.  
Let $\mathbb{F}_{p^{k}}=\bigcap_{i >0, \alpha_{1 i} \ne 0}\mathbb{F}_{p^i} \cap \bigcap_{j>0, \alpha_{2 j} \ne 0}\mathbb{F}_{p^j}$. 
\begin{itemize} 
\item[(a)] If $L_1 = L_2$, then ${\rm Aut}(X) \cong \Sigma \rtimes D_{p^{k}-1}$, where $\Sigma$ is an elementary abelian $p$-group of order $p^{2e}$ and $D_{p^k-1}$ is the dihedral group of order $2(p^k-1)$. 
\item[(b)] If $L_1 \ne L_2$, then ${\rm Aut}(X) \cong \Sigma \rtimes \mathbb{F}_{p^{k}}^*$.  
\end{itemize}
\end{fact}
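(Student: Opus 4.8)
The plan is to prove both inclusions: first exhibit the stated group as an explicit subgroup of $\mathrm{Aut}(X)$, then show nothing else occurs, the latter resting on the single hard point that the translation group $\Sigma$ is normal. For the lower bound I would write down three families of automorphisms on the affine model $L_1(x)L_2(y)+c=0$. Since $L_1,L_2$ are additive, the translations $(x,y)\mapsto(x+a,\,y+b)$ with $a\in\ker L_1$ and $b\in\ker L_2$ preserve the equation and form an elementary abelian $p$-group $\Sigma=\ker L_1\times\ker L_2$ of order $p^{2e}$. Next, for $\mu\in k^*$ the map $\sigma_\mu\colon(x,y)\mapsto(\mu x,\mu^{-1}y)$ preserves the equation precisely when $L_1(\mu x)=\mu L_1(x)$ and $L_2(\mu^{-1}y)=\mu^{-1}L_2(y)$; comparing coefficients of the additive polynomials forces $\mu^{p^i}=\mu$ for every $i>0$ with $\alpha_{1i}\ne 0$ or $\alpha_{2i}\ne 0$, that is, $\mu\in\mathbb{F}_{p^{k}}^*$. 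These maps give a cyclic group of order $p^k-1$ normalizing $\Sigma$, whence $\Sigma\rtimes\mathbb{F}_{p^k}^*\le\mathrm{Aut}(X)$. Finally, when $L_1=L_2$ the involution $\tau\colon(x,y)\mapsto(y,x)$ also preserves the equation and satisfies $\tau\sigma_\mu\tau=\sigma_\mu^{-1}$, so together with the cyclic group it generates $D_{p^k-1}$, giving $\Sigma\rtimes D_{p^k-1}\le\mathrm{Aut}(X)$ in case (a).

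Granting that $\Sigma$ is normal in $G:=\mathrm{Aut}(X)$, I would finish cleanly. The function $u=L_1(x)=-c/L_2(y)$ is fixed by $\Sigma$ and satisfies $[k(X):k(u)]=p^{2e}$, so $X/\Sigma\cong\mathbb{P}^1_u$ and $X\to\mathbb{P}^1_u$ is Galois with group $\Sigma$; a direct fibre analysis shows it is branched only over $u=0$ (the poles of $y$, a set $B$, with inertia $\ker L_2$) and $u=\infty$ (the poles of $x$, a set $A$, with inertia $\ker L_1$). Consequently $G/\Sigma$ embeds into $\mathrm{PGL}_2(k)$ acting on $u$ and must fix the pair $\{0,\infty\}$, hence lies in $\{u\mapsto\lambda u\}\rtimes\langle u\mapsto 1/u\rangle$; being finite, it is cyclic or dihedral. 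An element fixing $0$ and $\infty$ preserves $A$ and $B$, normalizes their inertia groups, and therefore preserves $k(x)$ and $k(y)$; lifting $u\mapsto\lambda u$ and using the absence of poles together with the additivity of $L_1$ forces the action on $x$ to be $x\mapsto\mu x$ modulo a translation in $\Sigma$, with $\mu\in\mathbb{F}_{p^{k}}^*$ and $\lambda=\mu$. Thus the subgroup of $G/\Sigma$ fixing $0,\infty$ is exactly $\mathbb{F}_{p^k}^*$. An element exchanging $0$ and $\infty$ swaps $A\leftrightarrow B$ and hence $k(x)\leftrightarrow k(y)$, which is compatible with the equation only when $L_1=L_2$; this accounts for the extra involution in case (a) and its absence in case (b). Since the subgroups exhibited above split the extension, one gets $G=\Sigma\rtimes\mathbb{F}_{p^k}^*$ or $G=\Sigma\rtimes D_{p^k-1}$, as claimed.

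The heart of the matter, and the step I expect to be the main obstacle, is proving that $\Sigma$ is the unique Sylow $p$-subgroup of $G$. I would first compute $g(X)=(p^e-1)^2$ by Riemann--Hurwitz applied to $X\to\mathbb{P}^1_y$, which is totally wildly ramified over the $p^e$ roots of $L_2$, and record that $X$ is ordinary, its $p$-rank being $(p^e-1)^2$ as confirmed by the Deuring--Shafarevich formula for $X\to\mathbb{P}^1_u$. For a Sylow $p$-subgroup $S\supseteq\Sigma$ I would then apply Deuring--Shafarevich to the (again ordinary) quotient $X\to X/S$ to bound $|S|$, the two-point totally wild branching of $X\to X/\Sigma$ being what pins the count down. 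Once $|S|=p^{2e}$ is forced, so that $\Sigma$ is Sylow, normality would follow by showing that $G$ permutes the $2p^e$ points of $A\cup B$ -- characterized intrinsically as the totally ramified points of the two base-point-free pencils of minimal degree $p^e$ -- so that $G$ normalizes $\Sigma=\langle\ker L_1,\ker L_2\rangle$.

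The genuinely delicate case is $p=2$. The $p$-rank count alone does not exclude a $2$-group strictly larger than $\Sigma$, so in characteristic two one must argue more finely -- analysing the action of $S/\Sigma$ on $X/\Sigma=\mathbb{P}^1_u$ together with the inertia at the two totally ramified fibres -- to rule out extra wild automorphisms. This is precisely the point at which the present treatment, unlike that of Montanucci and Zini, remains valid when $p=2$, and it is where I expect the real work of the argument to lie.
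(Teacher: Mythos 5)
Your proposal has a genuine gap at exactly the point you yourself flag as ``the heart of the matter'': you never actually prove that $\Sigma$ is the unique Sylow $p$-subgroup of ${\rm Aut}(X)$. For $p>2$ your sketch via ordinariness and the Deuring--Shafarevich formula is in substance Nakajima's theorem --- the very ingredient this paper is written to avoid --- and while it does pin the order of a Sylow $p$-subgroup down to $p^{2e}$ for odd $p$, an order count does not give uniqueness or normality; your fallback, that ${\rm Aut}(X)$ permutes $A\cup B$ and ``therefore normalizes $\Sigma$,'' is unjustified as stated, since $\Sigma$ is not visibly recoverable from the set $A\cup B$ alone (the paper instead uses the invariance of $A\cup B$ only to deduce $\sigma^*D=D$). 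More seriously, for $p=2$ you concede outright that the $p$-rank argument fails and that ``one must argue more finely,'' without supplying that finer argument. Since the whole point of the paper is that the statement also holds for $p=2$ (Theorem~\ref{char 2}) by a proof independent of Nakajima's theorem and of the even-genus lemmas of Giulietti--Korchm\'{a}ros, this deferral leaves the proof incomplete precisely where the real difficulty lies. Your lower-bound computation (the explicit subgroups $\Sigma$, $\Gamma$, $\tau$ and the condition $\mu\in\mathbb{F}_{p^k}^*$) does agree with the paper's.

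For comparison, the paper's route never forms the quotient $X/\Sigma$ and never touches $p$-ranks or Sylow subgroups. It characterizes $\Omega_1\cup\Omega_2$ intrinsically as $\{P : q\in H(P)\}$ via the explicit canonical system (Lemmas~\ref{canonical} and~\ref{weierstrass}), so that ${\rm Aut}(X)$ preserves the divisor $D$; proves $\mathcal{L}(D)=\langle 1,x,y,xy\rangle$ (Lemma~\ref{complete}) to get ${\rm Aut}(X)\hookrightarrow PGL(4,k)$; observes that the intersection point $(0:0:0:1)$ of the two lines spanned by $\varphi(\Omega_1)$ and $\varphi(\Omega_2)$ is fixed, which cuts the representation down to $PGL(3,k)$ (Theorem~\ref{extendable in P^2}); and then classifies the resulting plane linear maps by an elementary matrix computation with the tangent lines at the two singular points. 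To repair your argument you would either have to carry out the fine characteristic-two analysis you postpone, or adopt a linearization step of this kind that handles the full automorphism group at once.
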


It is assumed that the characteristic is {\it odd} in \cite{montanucci-zini}. 
One key point to prove is \cite[Lemma 3.1 v) and Corollary 3.2]{montanucci-zini}, which asserts that a Sylow $p$-subgroup of ${\rm Aut}(X)$ is linear and acts on $\Omega_1 \cup \Omega_2$, where the set $\Omega_1$ (resp. $\Omega_2$) consists of all poles of $x$ (resp. of $y$). 
This assertion relies on a theorem of Nakajima \cite[Theorem 1]{nakajima} on relations between the $p$-rank and Sylow $p$-subgroups of the automorphism group of algebraic curves.   
Another key point is that the genus $(p^e-1)^2$ of $X$ is {\it even} if $p>2$, because Montanucci and Zini used some group-theoretic lemmas by a work of Giulietti and Korchm\'{a}ros \cite{giulietti-korchmaros} for algebraic curves of {\it even} genus. 

An alternative proof of Fact \ref{montanucci and zini} for the ASM curve was obtained by Korchm\'{a}ros and Montanucci \cite{korchmaros-montanucci}. 
It was proved that the linear system induced by some embedding into $\mathbb{P}^3$ is complete, and asserted that ${\rm Aut}(X)$ acts on $\Omega_1 \cup \Omega_2$, by using its completeness. 
We will prove the same things for generalized Artin--Schreier--Mumford curves in a different order. 
It was pointed out by Garcia \cite{garcia2} (see also \cite{boseck, garcia1}) that points of $\Omega_1 \cup \Omega_2$ are Weierstrass points (see Lemma \ref{weierstrass} for a more precise statement), and this implies that ${\rm Aut}(X)$ acts on $\Omega_1 \cup \Omega_2$. 
We reprove it. 
We also present an elementary proof of the completeness of the linear system for generalized ASM curves (Lemma \ref{complete}). 
With these two results combined, an inclusion ${\rm Aut}(X) \hookrightarrow PGL(4, k)$ is obtained (Corollary \ref{extendable in P^3}). 
More strongly: 

\begin{thm} \label{extendable in P^2}
There exists an injective homomorphism
$$ {\rm Aut}(X) \cong {\rm Bir}(C) \hookrightarrow PGL(3, k). $$ 
\end{thm}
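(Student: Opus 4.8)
The plan is to realise ${\rm Aut}(X)$ inside $PGL(3,k)$ by showing that every automorphism preserves the three-dimensional space of functions $W:=\langle 1,x,y\rangle$ that defines the plane model $C$. The identification ${\rm Aut}(X)\cong{\rm Bir}(C)$ is the standard one between the automorphism group of the smooth model $X$ and the birational self-maps of $C$; once $W$ is invariant, an automorphism $\sigma$ sends $(1:x:y)$ to $(1:\sigma^* x:\sigma^* y)$ with $\sigma^* x,\sigma^* y\in W$ and is therefore visibly linear, while injectivity is immediate since $\sigma^* x=x,\ \sigma^* y=y$ forces $\sigma={\rm id}$. By Corollary \ref{extendable in P^3} we already have ${\rm Aut}(X)\hookrightarrow PGL(4,k)$ from the complete linear system $|D|$ with $D=\Omega_1+\Omega_2$ (Lemma \ref{complete}), a basis of $L(D)$ being $1,x,y,xy$; this embeds $X$ in $\mathbb{P}^3$ with coordinates $(X_0:X_1:X_2:X_3)=(1:x:y:xy)$. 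I would start from the evident relation $X_0X_3=X_1X_2$, so that $X$ lies on the Segre quadric $Q\cong\mathbb{P}^1\times\mathbb{P}^1$, whose two rulings are exactly the fibres of $x$ and of $y$.

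The first real step is to prove that $Q$ is the \emph{unique} quadric through $X$. As a curve on $Q\cong\mathbb{P}^1\times\mathbb{P}^1$ the two projections both have degree $p^e$, so $X$ has class $p^e H$ with $H=\mathcal{O}_Q(1)$; a second quadric would cut out on $Q$ an effective divisor in the class $2H$ containing $X$, which is impossible for $p^e>2$. Hence every $\sigma\in{\rm Aut}(X)\subset PGL(4,k)$ maps $Q$ to itself and so induces an automorphism of $\mathbb{P}^1\times\mathbb{P}^1$, which either preserves each ruling or interchanges the two. In terms of functions this means $\sigma^* x\in k(x)$ in the first case and $\sigma^* x\in k(y)$ in the second, and symmetrically for $\sigma^* y$. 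Since the Segre quadric is smooth in every characteristic, this dichotomy is available also when $p=2$.

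The key computation is then the identification $L(D)\cap k(x)=\langle 1,x\rangle$, and likewise $L(D)\cap k(y)=\langle 1,y\rangle$. Here I would use the ramification of $x\colon X\to\mathbb{P}^1$: over $x=\infty$ the fibre is the set $\Omega_1$ of $p^e$ \emph{simple} poles of $x$, while over each root of $L_1$ the map is totally ramified at a single point of $\Omega_2$ of multiplicity $p^e$. Thus a nonconstant $f\in k(x)$ with polar divisor bounded by $D=\Omega_1+\Omega_2$ can have no finite pole (a pole at a non-root would lie off $\Omega_1\cup\Omega_2$, a pole at a root of $L_1$ would have order $p^e>1$), so $f\in k[x]$; and because each point of $\Omega_1$ is a simple pole, $\deg_x f\le 1$. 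This yields $L(D)\cap k(x)=\langle 1,x\rangle$, and the same argument gives $L(D)\cap k(y)=\langle 1,y\rangle$.

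Finally I combine the ingredients. Since the set $\Omega_1\cup\Omega_2$ is ${\rm Aut}(X)$-invariant (Lemma \ref{weierstrass}) and $D$ is reduced, $\sigma$ preserves $D$, hence $L(D)$, so $\sigma^* x,\sigma^* y\in L(D)$. By the ruling dichotomy each of them lies in $L(D)\cap k(x)=\langle 1,x\rangle$ or in $L(D)\cap k(y)=\langle 1,y\rangle$; in either case $\sigma^* x,\sigma^* y\in W=\langle 1,x,y\rangle$, so $W$ is $\sigma$-invariant. This produces the injective homomorphism ${\rm Aut}(X)\to PGL(3,k)$. I expect the main obstacle to be exactly the uniqueness of $Q$ together with the pole bookkeeping of the third paragraph: it is the total ramification of $x$ over the roots of $L_1$, forcing pole order $p^e$, that rules out Möbius denominators and collapses each ruling action to an affine-linear one. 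The remaining verifications are formal.
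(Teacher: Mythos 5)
Your argument is correct, but it reaches the invariance of $\langle 1,x,y\rangle$ by a genuinely different mechanism than the paper. The paper's proof is a two-line observation: the $q$ points of $\varphi(\Omega_1)$ and the $q$ points of $\varphi(\Omega_2)$ span the lines $Y=Z=0$ and $X=Z=0$ respectively; since ${\rm Aut}(X)\subset PGL(4,k)$ permutes $\Omega_1\cup\Omega_2$ and $q>2$, it permutes these two lines and hence fixes their intersection point $R=(0:0:0:1)$, so it preserves the subspace of linear forms vanishing at $R$, which is exactly $\langle x,y,1\rangle\subset\mathcal{L}(D)$. You instead prove that the Segre quadric $ZW=XY$ is the unique quadric through $\varphi(X)$ (via the bidegree count $(q,q)$ versus $(2,2)$, again using $q>2$), deduce that each automorphism preserves or swaps the two rulings, and then pin down $\mathcal{L}(D)\cap k(x)=\langle 1,x\rangle$ by the pole bookkeeping at the totally ramified fibres over the roots of $L_1$. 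All of these steps check out, including in characteristic two, and the same two external inputs (Corollary \ref{action} for $\sigma^*D=D$ and Corollary \ref{extendable in P^3} for linearity in $\mathbb{P}^3$) are used in both arguments. Your route is longer, but it buys more: the dichotomy $\sigma^*x\in\langle 1,x\rangle$ or $\sigma^*x\in\langle 1,y\rangle$ is precisely the structural information that the paper only extracts later, in the proof of Fact \ref{montanucci and zini}, from the action on ${\rm Sing}(C)=\{P',Q'\}$ and on the tangent lines there; so your quadric argument essentially absorbs part of that subsequent analysis. One cosmetic point: for injectivity you should note that an element of the kernel acts on $W=\langle 1,x,y\rangle$ by a scalar which must be $1$ because $\sigma^*1=1$, whence $\sigma^*x=x$ and $\sigma^*y=y$ and $\sigma={\rm id}$ on $k(X)=k(x,y)$.
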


This is very close to the theorem of Montanucci and Zini. 
Therefore: 

\begin{thm} \label{char 2}
The same assertion as Fact \ref{montanucci and zini} holds for the case where $p=2$. 
\end{thm}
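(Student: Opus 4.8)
The plan is to observe that every ingredient behind Theorem \ref{extendable in P^2} is insensitive to the characteristic, and then to determine ${\rm Aut}(X)$ entirely inside $PGL(3,k)$, where no parity hypothesis on the genus is ever invoked. Concretely, Lemma \ref{weierstrass}, Lemma \ref{complete}, Corollary \ref{extendable in P^3} and Theorem \ref{extendable in P^2} are all established without assuming $p$ odd; in particular, for $p=2$ we still have an injection ${\rm Aut}(X) \cong {\rm Bir}(C) \hookrightarrow PGL(3,k)$, and ${\rm Aut}(X)$ still acts on the set $\Omega_1 \cup \Omega_2$ of poles of $x$ and of $y$. This is precisely the feature that the proof of Montanucci and Zini lacked: their use of the even-genus lemmas of Giulietti and Korchm\'{a}ros forced $p>2$, whereas the genus $(p^e-1)^2$ is odd when $p=2$. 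Once the embedding is available, the remaining task is the purely linear-algebraic classification of the subgroup $G \le PGL(3,k)$ of projective transformations preserving $C$, and this classification proceeds identically for $p=2$.

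First I would identify the two distinguished points $P_1=(1:0:0)$ and $P_2=(0:1:0)$ at infinity, over which $\Omega_1$ and $\Omega_2$ respectively lie, and note that $G$ permutes $\{P_1,P_2\}$, so that the subgroup $G_0$ fixing both has index at most two. An element of $G_0$ fixes both directions at infinity, hence is an affine map of the separated form $x \mapsto a x + b$, $y \mapsto d y + e$. Imposing invariance of $L_1(x)L_2(y)+c$ and using the additivity $L_i(u+v)=L_i(u)+L_i(v)$ forces $b$ to be a root of $L_1$ and $e$ a root of $L_2$ (killing the cross terms), while the $i=0$ and $j=0$ coefficients (recall $\alpha_{10},\alpha_{20}\ne 0$) give $L_1(ax)=aL_1(x)$ and $L_2(dy)=dL_2(y)$; comparing remaining coefficients yields $a^{p^i}=a$ for every $i>0$ with $\alpha_{1i}\ne 0$, $d^{p^j}=d$ for every $j>0$ with $\alpha_{2j}\ne 0$, and $ad=1$. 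Thus $(a,d)=(\lambda,\lambda^{-1})$ with $\lambda \in \mathbb{F}_{p^k}^*$. Since each $L_i$ is separable ($\alpha_{i0}\ne 0$), its root set is an $e$-dimensional $\mathbb{F}_p$-vector space, so the translation part is the elementary abelian group $\Sigma$ of order $p^{2e}$ and $G_0 \cong \Sigma \rtimes \mathbb{F}_{p^k}^*$, the scaling action permuting the roots. None of these steps uses $p>2$.

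Finally I would treat the swap: an element of $G \setminus G_0$ interchanges $P_1$ and $P_2$, and a short computation shows that such an element exists precisely when $L_1=L_2$, the simplest instance being $(x,y)\mapsto(y,x)$; conjugation by it inverts $\lambda \mapsto \lambda^{-1}$, so it enlarges $\mathbb{F}_{p^k}^*$ to the dihedral group $D_{p^k-1}$ and gives ${\rm Aut}(X)\cong \Sigma \rtimes D_{p^k-1}$ in case (a), whereas in case (b) ($L_1 \ne L_2$) no swap survives and ${\rm Aut}(X)\cong \Sigma \rtimes \mathbb{F}_{p^k}^*$. The converse inclusion is the easy direction: the listed transformations visibly preserve $C$ and hence lie in ${\rm Aut}(X)$, so the orders match. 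I do not expect a single hard computation here; the main point to watch is the \emph{audit} of characteristic two, namely checking that the degeneracy $-1=1$ merely collapses the scalar $\lambda=-1$ (already an element of $\mathbb{F}_{p^k}^*$) rather than eliminating any automorphism, and that separability of the $L_i$ and the $\mathbb{F}_{p^k}$-constraint are unaffected. Since Theorem \ref{extendable in P^2} removes the even-genus obstruction and every subsequent computation is uniform in $p$, the conclusion of Fact \ref{montanucci and zini} holds verbatim for $p=2$.
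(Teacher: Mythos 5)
Your proposal is correct and follows essentially the same route as the paper: the paper likewise derives Theorem \ref{char 2} by noting that Lemmas \ref{canonical}--\ref{complete}, Corollary \ref{extendable in P^3} and Theorem \ref{extendable in P^2} never use $p>2$, and then carries out exactly your linear-algebraic classification inside $PGL(3,k)$ (stabilizer of $\{P',Q'\}$, separated affine form, invariance of $L_1(x)L_2(y)+c$ forcing $\Sigma$, $ad=1$ and $a\in\mathbb{F}_{p^k}^*$, and the swap existing precisely in the case $L_1=L_2$). The only cosmetic difference is that the paper normalizes by composing with explicit elements of $\Sigma$ and $\Gamma$ in matrix form rather than reading off coefficients directly.
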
 

As an application of the theorem of Montanucci and Zini, the arrangement of Galois points or Galois lines for the generalized Artin--Schreier--Mumford curve is determined in Sections 3 and 4. 

\section{Proof of Theorems \ref{extendable in P^2} and \ref{char 2}}
The system of homogeneous coordinates on $\mathbb{P}^2$ is denoted by $(X:Y:Z)$ and the system of affine coordinates of $\mathbb{A}^2$ is denoted by $(x, y)$ with $x=X/Z$ and $y=Y/Z$. 

Let $q=p^e$. 
The set of all poles of $x$ (resp. of $y$) is denoted by $\Omega_1$ (resp. by $\Omega_2$), which coincides with the set of all zeros of $L_2(y)$ (resp. of $L_1(x)$). 
The sets $\Omega_1$ and $\Omega_2$ consist of $q$ points.  
The pole of $x$ (resp. of $y$) corresponding to $y=\beta$ (resp. $x=\alpha$) for $L_2(\beta)=0$ (resp. $L_1(\alpha)=0$) is denoted by $P_{\beta}$ (resp. by $Q_\alpha$). 
For the point $P_{\beta}$, $t=\frac{1}{x}$ is a local parameter. 
Let $P'=(1:0:0)$ and $Q'=(0:1:0) \in \mathbb{P}^2$. 
Then, ${\rm Sing}(C)=\{P', Q'\}$, and the point $P'$ (resp. the point $Q'$) is the image of $\Omega_1$ (resp. of $\Omega_2$) under the normalization.

Let $D$ be the divisor given by the line $\{Z=0\}$ of the plane model.  
It is known that the genus $g$ of $X$ is equal to $(q-1)^2$ (see \cite[Lemma 3.1]{montanucci-zini}, \cite[III. 7.10]{stichtenoth}). 
Therefore, the degree of the canonical divisor is $2g-2=2q(q-2)$. 
We consider the linear space $\mathcal{L}((q-2)D)$ associated with the divisor $(q-2)D$. 
The following two lemmas were proved by Boseck \cite{boseck} and Garcia \cite{garcia2} in a more general setting (see also \cite{garcia1}). 
We reprove them, for the convenience of the readers. 

\begin{lem} \label{canonical} 
A divisor $(q-2)D$ is a canonical divisor, and 
$$\mathcal{L}((q-2)D)=\langle x^i y^j \ | \ 0 \le i, j \le q-2 \rangle. $$
\end{lem}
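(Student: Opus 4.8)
The plan is to produce an explicit canonical differential whose divisor is exactly $(q-2)D$, and separately to show that each monomial $x^iy^j$ with $0\le i,j\le q-2$ lies in $\mathcal{L}((q-2)D)$; comparing dimensions then forces equality.

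First I would record the principal divisors of $x$ and $y$. Since $x$ has a simple pole at each of the $q$ points of $\Omega_1$, and the fibre of $x$ over $0$ is the single totally ramified point $Q_0\in\Omega_2$ (where $y=\infty$, $x=0$), one gets $\mathrm{div}(x)=qQ_0-\sum_{\beta}P_\beta$, and symmetrically $\mathrm{div}(y)=qP_0-\sum_{\alpha}Q_\alpha$, where $P_0\in\Omega_1$ and $Q_0\in\Omega_2$ are the points with $y=0$ and $x=0$ respectively. Here $D=\sum_\beta P_\beta+\sum_\alpha Q_\alpha$ has degree $2q$. From these divisors the order of $x^iy^j$ at every point is immediate: it is $0$ at affine points, at least $-i\ge-(q-2)$ along $\Omega_1$ and at least $-j\ge-(q-2)$ along $\Omega_2$ (the special points $P_0,Q_0$ only acquiring extra positive contributions). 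Hence $x^iy^j\in\mathcal{L}((q-2)D)$ for $0\le i,j\le q-2$, and as distinct monomials — using that $1,y,\dots,y^{q-1}$ form a $k(x)$-basis of $k(X)$ — these $(q-1)^2$ functions are linearly independent, so $\dim\mathcal{L}((q-2)D)\ge(q-1)^2$.

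For the canonical statement I would differentiate the defining relation. Because $L_1,L_2$ are linearized, $L_1'(x)=\alpha_{10}$ and $L_2'(y)=\alpha_{20}$ are constants, so $\alpha_{10}L_2(y)\,dx+\alpha_{20}L_1(x)\,dy=0$, and dividing by $L_1(x)L_2(y)$ yields the single differential $\omega=\frac{dx}{L_1(x)}=-\frac{\alpha_{20}}{\alpha_{10}}\frac{dy}{L_2(y)}$. I claim $\mathrm{div}(\omega)=(q-2)D$. At an affine point $x$ is unramified and $L_1(x)\ne0$, so $\omega$ has order $0$. At $P_\beta\in\Omega_1$ the uniformizer is $1/x$, whence $\mathrm{ord}(dx)=-2$ and $\mathrm{ord}(L_1(x))=-q$, giving order $q-2$. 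At $Q_\alpha\in\Omega_2$ I would instead use the second expression: the uniformizer is $1/y$, so $\mathrm{ord}(dy)=-2$ and $\mathrm{ord}(L_2(y))=-q$, again giving order $q-2$. Summing, $\mathrm{div}(\omega)=(q-2)D$, which has degree $2q(q-2)=2g-2$, confirming that $(q-2)D$ is canonical. Consequently $\dim\mathcal{L}((q-2)D)=g=(q-1)^2$, matching the lower bound, so the $x^iy^j$ form a basis and both displayed assertions follow.

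The main obstacle is that the points of $\Omega_1\cup\Omega_2$ are wildly ramified for the maps $x,y\colon X\to\mathbb{P}^1$, so a naive computation of $\mathrm{div}(\omega)$ through the different would require analyzing a power-series expansion in which the leading term $s^q$ has vanishing derivative. The device that circumvents this is precisely the pair of expressions for $\omega$: at each ramified point one of $x,y$ has only a simple pole, so the order of $\omega$ can be read off directly from whichever expression is adapted to that point, and the different exponent $2(q-1)$ never has to be computed by hand.
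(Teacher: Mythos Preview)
Your proof is correct, but it inverts the logic of the paper's argument. The paper proves the \emph{upper} bound $\mathcal{L}((q-2)D)\subseteq\langle x^iy^j:0\le i,j\le q-2\rangle$ directly, by showing that any polynomial representative of an element of the Riemann--Roch space must have bidegree at most $(q-2,q-2)$ (using order computations at poles of $x$ and of $y$); linear independence then gives $\dim\mathcal{L}((q-2)D)=(q-1)^2=g$, and the numerical criterion $(\deg=2g-2,\ \dim=g)$ forces $(q-2)D$ to be canonical. You instead prove the \emph{lower} bound $\langle x^iy^j\rangle\subseteq\mathcal{L}((q-2)D)$, and obtain the matching upper bound by exhibiting the explicit holomorphic differential $\omega=dx/L_1(x)=-(\alpha_{20}/\alpha_{10})\,dy/L_2(y)$ with divisor $(q-2)D$, so that $\dim=g$ by Riemann--Roch. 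Your approach is more constructive and the two-expression trick for $\omega$ handles the wild ramification cleanly; the paper's approach avoids differentials altogether and is closer in spirit to the analogous computation of $\mathcal{L}(D)$ in the next lemma.
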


\begin{proof}
Each element of the linear space $\mathcal{L}((q-2)D)$ is represented by a polynomial of $x$ and $y$, since each function $g \in \mathcal{L}((q-2)D)$ is regular on the affine open set $C \cap \{Z \ne 0\}$. 
Using the defining polynomial, since
$$ x^q y^q=\sum_{i \le q, j \le q, (i, j) \ne (q, q)} a_{ij} x^i y^j $$
in $k(X)$, it follows that any $g \in \mathcal{L}((q-2)D)$ is represented as a linear combination of monomials 
$$ x^i y^j \ \mbox{ with } i<q \ \mbox{ or } \ j <q. $$

Assume that there exists an element $g=\sum_{i=0}^m a_i(y) x^i \in \mathcal{L}((q-2)D)$ with $m \ge q$ and $a_m(y) \ne 0$. 
Then, $\deg a_m(y) <q$.
This implies that there exists a pole $P$ of $x$ such that ${\rm ord}_P a_m(y)=0$. 
Then, ${\rm ord}_P g=-m \le -q< -(q-2)$. 
This is a contradiction to $g \in \mathcal{L}((q-2)D)$.  
It follows that any $g \in \mathcal{L}((q-2)D)$ is represented as a linear combination of monomials 
$$ x^i y^j \ \mbox{ with } i<q \ \mbox{ and } \ j <q. $$

Let $g=\sum_{i=0}^m a_i(y) x^i$ with $m < q$. 
Since $\deg a_m(y) <q$, there exists a pole $P$ of $x$ such that ${\rm ord}_P a_m(y)=0$. 
Then, ${\rm ord}_P g=-m$. 
By the condition $g \in \mathcal{L}((q-2)D)$, $-m ={\rm ord}_P g \ge -(q-2)$. 
It follows that any $g \in \mathcal{L}((q-2)D)$ is represented as a linear combination of monomials 
$$ x^i y^j \ \mbox{ with } i \le q-2 \ \mbox{ and } \ j \le q-2. $$
These monomials are linearly independent, since $\deg C=2q$. 
Since $\deg (q-2)D=2g-2$ and $\dim \mathcal{L}((q-2)D) =g$, it follows from \cite[I.6.2]{stichtenoth} that the assertion follows.
\end{proof}

\begin{lem} \label{weierstrass} 
The set $\Omega_1 \cup \Omega_2$ coincides with $\{P \in X \ | \ q \in H(P)\}$, where $H(P)$ is the Weierstrass semigroup of $P$.  
In particular, all points of $\Omega_1 \cup \Omega_2$ are Weierstrass points. 
\end{lem}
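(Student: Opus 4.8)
The plan is to prove the two inclusions $\Omega_1 \cup \Omega_2 \subseteq \{P : q \in H(P)\}$ and $\{P : q \in H(P)\} \subseteq \Omega_1 \cup \Omega_2$ separately, translating the non-gap condition into the language of the canonical system furnished by Lemma \ref{canonical}. Recall the Riemann--Roch criterion: for $n \ge 1$ one has $n \in H(P)$ if and only if no regular differential vanishes to order exactly $n-1$ at $P$, that is, $\ell((q-2)D-(q-1)P)=\ell((q-2)D-qP)$ in the case $n=q$. Since $(q-2)D$ is a canonical divisor supported on $\Omega_1\cup\Omega_2$ by Lemma \ref{canonical}, every regular differential is of the form $g\,\omega_0$ with $g\in\langle x^iy^j \mid 0\le i,j\le q-2\rangle$ and $\omega_0$ a fixed differential with ${\rm div}(\omega_0)=(q-2)D$; in particular ${\rm ord}_P(\omega_0)=0$ at every affine point $P$.

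For the inclusion $\Omega_1\cup\Omega_2\subseteq\{P:q\in H(P)\}$ I would exhibit explicit functions. For $P_\beta\in\Omega_1$ I would compute ${\rm div}(y-\beta)$: since $L_1(x)L_2(y)=-c$ and $t=1/x$ is a local parameter with ${\rm ord}_{P_\beta}(L_1(x))=-q$, one gets ${\rm ord}_{P_\beta}(L_2(y))=q$, and because $L_2$ is separable (as $\alpha_{20}\ne 0$) with simple root $\beta$, this yields ${\rm ord}_{P_\beta}(y-\beta)=q$. As $y=\beta$ would force $c=0$ at any affine point and holds at no other point of $\Omega_1\cup\Omega_2$, the zero divisor of $y-\beta$ is exactly $qP_\beta$. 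Hence $1/(y-\beta)$ has a pole of order precisely $q$ at $P_\beta$ and is regular elsewhere, so $q\in H(P_\beta)$. The symmetric function $1/(x-\alpha)$ handles $Q_\alpha\in\Omega_2$.

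For the reverse inclusion I would show that every affine point $P=(a,b)$ satisfies $q\notin H(P)$ by producing a regular differential vanishing to order exactly $q-1$ at $P$. Writing $F=L_1(x)L_2(y)+c$, the point $P$ is smooth with $\partial F/\partial y=\alpha_{20}L_1(a)\ne 0$ and $\partial F/\partial x=\alpha_{10}L_2(b)\ne 0$, so both $x-a$ and $y-b$ are local parameters at $P$ and each vanishes to order exactly $1$. Consequently $g=(x-a)^{q-2}(y-b)$, which clearly lies in $\langle x^iy^j \mid 0\le i,j\le q-2\rangle$ since $q\ge 3$, satisfies ${\rm ord}_P(g)=(q-2)+1=q-1$. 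As ${\rm ord}_P(\omega_0)=0$, the regular differential $g\,\omega_0$ vanishes to order exactly $q-1$ at $P$, forcing $q$ to be a gap. The final clause of the lemma is then immediate, since $q\in H(P)$ together with $q\le (q-1)^2=g$ shows every point of $\Omega_1\cup\Omega_2$ is a Weierstrass point.

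The main technical point to verify carefully will be the first inclusion, namely that the zero divisor of $y-\beta$ is supported entirely at $P_\beta$ with full multiplicity $q$; this rests on the separability of $L_2$ together with the observation that $y=\beta$ cannot occur at an affine point. By contrast, the reverse inclusion is essentially immediate once one notes that the canonical differential is supported away from the affine locus, so that the elementary order computation for $(x-a)^{q-2}(y-b)$ suffices.
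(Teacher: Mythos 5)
Your proof is correct. The half dealing with affine points is essentially identical to the paper's: both exhibit $(x-a)^{q-2}(y-b)\in\mathcal{L}((q-2)D)$, note that $x-a$ and $y-b$ are both local parameters at $(a,b)$ so the order there is exactly $q-1$, and use that the canonical divisor $(q-2)D$ is supported away from the affine locus to conclude that $q$ is a gap. For the points of $\Omega_1\cup\Omega_2$ your route is genuinely different: you produce the function $1/(y-\beta)$ and verify that the zero divisor of $y-\beta$ is exactly $qP_\beta$ (via ${\rm ord}_{P_\beta}L_2(y)=q$, the separability of $L_2$, and the observation that $y=\beta$ has no affine solutions), so $q\in H(P_\beta)$ directly from the definition of the Weierstrass semigroup. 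The paper instead stays inside the canonical linear system: it twists by $(x^{q-2})$ and lists the vanishing orders of the functions $t^ky^j$ at $P_\beta$, finding the sequence $1,2,\ldots,q-2,q,\ldots$, so that $q-1$ is skipped and hence $q$ is a non-gap. Your version is more elementary for this half --- it does not need the basis of $\mathcal{L}((q-2)D)$ from Lemma~\ref{canonical} at the points at infinity --- whereas the paper's computation yields the full order sequence of the canonical system at $P_\beta$ as a by-product. Both arguments turn on the same key computation ${\rm ord}_{P_\beta}(y-\beta)=q$, and your explicit remark that $q\le(q-1)^2=g$ (valid since $q\ge 3$) cleanly justifies the final clause about Weierstrass points.
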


\begin{proof}
We consider the embedding $\psi$ induced by the canonical linear system $|(q-2)D|$. 
Let $P \in \Omega_1$ and let $t=(1/x)$. 
Then, $t$ is a local parameter at $P$. 
Note that ${\rm ord}_P (y-\beta)=q$ for some $\beta \in k$. 
Considering the functions $t^k y^j \in \mathcal{L}((x^{q-2})+(q-2)D)$, it follows that the orders ${\rm ord}_P\psi^*H$ for hyperplanes $H \ni P$ are 
$$ 1, 2, \ldots, q-2, q, \ldots, $$
namely, $q-1+1$ is a {\it non-gap} (of pole numbers).  
On the other hand, ${\rm ord}_{(a, b)}(x-a)^{q-2}(y-b)=q-1$ for each point $(x, y)=(a, b)$ of $X \setminus (\Omega_1 \cup \Omega_2)$, since functions $x-a$ and $y-b$ are local parameters. 
\end{proof}

\begin{cor} \label{action}
The automorphism group ${\rm Aut}(X)$ preserves $\Omega_1 \cup \Omega_2$. 
\end{cor}

We consider the morphism 
$$ \varphi: X \rightarrow \mathbb{P}^3; \  (x:y:1:x y),  $$
similar to the case of the ASM curve (see \cite{korchmaros-montanucci}). 
For the point $P_{\beta} \in \Omega_1$ defined by $L_2(\beta)=0$,  $t=\frac{1}{x}$ is a local parameter at $P_{\beta}$. 
It follows that 
$$ \varphi=(x:y:1:x y)=(t x: t y: t: t x y)=(1:t y: t: y), $$
and $\varphi(P_{\beta})=(1:0:0:\beta)$. 
Therefore, $q$ points of $\varphi(\Omega_1)$ are contained in the line $Y=Z=0$ in $\mathbb{P}^3$ with a system of coordinates $(X:Y:Z:W)$. 
Similarly, $q$ points of $\varphi(\Omega_2)$ are contained in the line $X=Z=0$.

\begin{lem} \label{complete}
The morphism $\varphi$ is an embedding, and the linear system induced by $\varphi$ is complete. 
\end{lem}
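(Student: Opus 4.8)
The plan is to recognize $\varphi$ as the morphism attached to the four-dimensional subspace $V=\langle 1,x,y,xy\rangle$ of $\mathcal{L}(D)$. Indeed each of $1,x,y,xy$ has poles only along $\Omega_1\cup\Omega_2$ and of order at most one there (at $P_\beta\in\Omega_1$ the function $x$ has a simple pole while $y$ is regular, and symmetrically on $\Omega_2$), so $V\subseteq\mathcal{L}(D)$, where $\deg D=2q$ because $(q-2)D$ is canonical. Under this description the assertion that the induced linear system is complete is exactly the equality $V=\mathcal{L}(D)$, i.e. $\dim\mathcal{L}(D)=4$; since $1,x,y,xy$ are manifestly linearly independent, the real content is the reverse bound $\dim\mathcal{L}(D)\le 4$.

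For the embedding statement I would first check that $\varphi$ is injective and an immersion. On the affine locus the third coordinate is nonzero and $\varphi$ records the pair $(x,y)$, which separates the smooth affine points; on $\Omega_1$ the computation $\varphi(P_\beta)=(1:0:0:\beta)$ shows distinct images on the line $\{Y=Z=0\}$, and symmetrically $\varphi(Q_\alpha)=(0:1:0:\alpha)$ lies on $\{X=Z=0\}$. These three loci have disjoint images, since the third coordinate singles out the affine part and the two lines are distinct. For the immersion property, at an affine point one of $x,y$ is a local parameter and occurs among the affine coordinates $(x,y,xy)$, so $d\varphi\neq 0$; at $P_\beta$ the expression $\varphi=(1:ty:t:y)$ with $t=1/x$ exhibits the local parameter $t$ as a coordinate, so again $d\varphi\neq 0$, and likewise at $Q_\alpha$.

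The main work, and the main obstacle, is the bound $\dim\mathcal{L}(D)\le 4$, which I would prove by the pole-order method of Lemma \ref{canonical}. Given $g\in\mathcal{L}(D)$, reduce it via $x^qy^q=\sum a_{ij}x^iy^j$ to a polynomial $\sum c_{ij}x^iy^j$ in which every monomial satisfies $i<q$ or $j<q$, and let $m,n$ be its $x$- and $y$-degrees. I claim $m\le 1$ and $n\le 1$. The delicate point is to guarantee that the leading coefficient $a_m(y)$ of $x^m$ has degree $<q$: if $m\ge q$ this is forced by reducedness; arguing symmetrically in $y$ (swapping $\Omega_1\leftrightarrow\Omega_2$) one gets $n\le q-1$, whence $\deg a_m\le n\le q-1<q$. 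Since a nonzero polynomial of degree $<q$ cannot vanish at all $q$ distinct roots of $L_2$, there is a root $\beta_0$ with $a_m(\beta_0)\neq 0$; at $P_{\beta_0}$ the term $a_m(y)x^m$ has pole order exactly $m$ while each lower term $a_i(y)x^i$ has pole order $\ge -i>-m$ (the factor $y-\beta_0$ only raises orders), so no cancellation occurs and ${\rm ord}_{P_{\beta_0}}g=-m$. Membership $g\in\mathcal{L}(D)$ then forces $-m\ge -1$, so $m\le 1$, and symmetrically $n\le 1$; hence $g\in V$. The part requiring care is precisely this case analysis securing $\deg a_m<q$ together with the no-cancellation check. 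As a consistency test, Riemann--Roch gives $\dim\mathcal{L}(D)=2q+1-(q-1)^2+\dim\mathcal{L}((q-3)D)$, and the same reduction identifies $\mathcal{L}((q-3)D)$ with $\langle x^iy^j\mid 0\le i,j\le q-3\rangle$ of dimension $(q-2)^2$, again yielding $\dim\mathcal{L}(D)=4$.
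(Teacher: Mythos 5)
Your proof is correct and the completeness argument is essentially the paper's: the same reduction via $x^qy^q=\sum a_{ij}x^iy^j$ followed by evaluating pole orders at a point of $\Omega_1$ where the leading coefficient does not vanish (you are in fact slightly more careful than the paper in first securing $n<q$ so that $\deg a_m<q$, and in noting the no-cancellation step). The only difference is in the embedding claim, where the paper invokes the degree count $\varphi(X)\cap\{Z=0\}=2q$ points following Korchm\'aros--Montanucci, while you verify injectivity and the immersion condition directly from the local expressions; both are fine.
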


\begin{proof} 
The former assertion is derived from the fact that the set $\varphi(\Omega_1 \cup \Omega_2)=\varphi(X)\cap \{Z=0\}$ consists of $2q$ points (this proof is similar to \cite{korchmaros-montanucci}). 
We consider the latter assertion. 
It follows that $1, x, y, x y \in \mathcal{L}(D)$. 
Each element of the linear space $\mathcal{L}(D)$ is represented by a polynomial of $x$ and $y$, since each function $g \in \mathcal{L}(D)$ is regular on the affine open set $C \cap \{Z \ne 0\}$. 
Using the defining polynomial, since
$$ x^q y^q=\sum_{i \le q, j \le q, (i, j) \ne (q, q)} a_{ij} x^i y^j $$
in $k(X)$, it follows that any $g \in \mathcal{L}(D)$ is represented as a linear combination of monomials 
$$ x^i y^j \ \mbox{ with } i<q \ \mbox{ or } \ j <q. $$

Assume that $g=\sum_{i=0}^m a_i(y) x^i$ with $m \ge q$ and $a_m(y) \ne 0$. 
Then, $\deg a_m(y) <q$.
This implies that there exists a pole $P$ of $x$ such that ${\rm ord}_P a_m(y)=0$. 
Then, ${\rm ord}_P g=-m \le -q< -1$. 
This is a contradiction to $g \in \mathcal{L}(D)$.  
It follows that any $g \in \mathcal{L}((D)$ is represented as a linear combination of monomials 
$$ x^i y^j \ \mbox{ with } i<q \ \mbox{ and } \ j <q. $$

Let $g=\sum_{i=0}^m a_i(y) x^i$ with $m < q$ and $a_m(y) \ne 0$. 
Since $\deg a_m(y) <q$, there exists a pole $P$ of $x$ such that ${\rm ord}_P a_m(y)=0$. 
Then, ${\rm ord}_P g=-m$. 
By the condition $g \in \mathcal{L}(D)$, $-m ={\rm ord}_P g \ge -1$. 
It follows that any $g \in \mathcal{L}(D)$ is represented as a linear combination of monomials $ 1, x, y, x y$.  
\end{proof}

\begin{cor} \label{extendable in P^3}
There exists an injective homomorphism 
$$ {\rm Aut}(X) \hookrightarrow PGL(4, k). $$
\end{cor}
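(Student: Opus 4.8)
The plan is to invoke the standard principle that an automorphism of $X$ preserving a complete linear system which induces a projective embedding is itself induced by a projective linear transformation of the ambient space. By Lemma \ref{complete}, the morphism $\varphi=(x:y:1:xy)$ is an embedding and the linear system $|D|$ attached to $\mathcal{L}(D)=\langle 1, x, y, xy\rangle$ is complete. Hence it remains only to check that ${\rm Aut}(X)$ preserves $|D|$, and then to verify injectivity of the resulting map.

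First I would pin down the divisor $D$ itself. Since $(q-2)D$ is a canonical divisor by Lemma \ref{canonical}, its degree is $2g-2=2q(q-2)$, whence $\deg D=2q$. On the other hand the support of $D$ is contained in the set of poles of $x$ and $y$, that is, in $\Omega_1\cup\Omega_2$, which consists of exactly $2q$ points. Therefore $D$ is reduced and
$$ D=\sum_{P\in \Omega_1\cup\Omega_2}P. $$
By Corollary \ref{action}, every $\sigma\in{\rm Aut}(X)$ permutes the points of $\Omega_1\cup\Omega_2$; consequently $\sigma^*D=D$ as divisors, not merely as divisor classes. In particular $\sigma^*$ carries $\mathcal{L}(D)$ onto $\mathcal{L}(\sigma^*D)=\mathcal{L}(D)$ and restricts to a $k$-linear automorphism of the four-dimensional space $\langle 1, x, y, xy\rangle$.

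Finally I would convert this linear action into the asserted embedding of groups. Sending $\sigma$ to the linear automorphism induced on $\mathcal{L}(D)$, with the customary replacement of $\sigma^*$ by $(\sigma^{-1})^*$ to obtain a homomorphism, and passing to the quotient by scalars yields a map ${\rm Aut}(X)\to PGL(4,k)$; concretely, for each $\sigma$ there is a unique $\tilde{\sigma}\in PGL(4,k)$ with $\varphi\circ\sigma=\tilde{\sigma}\circ\varphi$, because $\varphi$ is built from a basis of $\mathcal{L}(D)$. Injectivity is then immediate: if $\tilde{\sigma}$ is the identity, then $\varphi\circ\sigma=\varphi$, and since $\varphi$ is an embedding, hence injective, we conclude $\sigma={\rm id}$. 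I expect the only genuinely delicate point to be the upgrade from the set-theoretic statement of Corollary \ref{action} to the divisor identity $\sigma^*D=D$; this is exactly what the degree computation $\deg D=2q=|\Omega_1\cup\Omega_2|$ secures, by forcing $D$ to be reduced. The remaining steps are the routine formalism of complete linear systems and projective embeddings.
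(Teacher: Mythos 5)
Your proposal is correct and follows essentially the same route as the paper: the paper's (very terse) proof likewise deduces $\sigma^*D=D$ from Corollary \ref{action} and then invokes the completeness $\dim|D|=3$ from Lemma \ref{complete} to get the linear action on $\mathcal{L}(D)=\langle 1,x,y,xy\rangle$. Your additional verifications --- that $D$ is reduced with support exactly $\Omega_1\cup\Omega_2$ (one can also just use $\deg D=\deg C=2q$ directly), and that injectivity follows from $\varphi$ being an embedding --- are exactly the details the paper leaves implicit.
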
 

\begin{proof}  
By Corollary \ref{action}, $\sigma^*D=D$ for each $\sigma \in {\rm Aut}(X)$. 
By Lemma \ref{complete}, $\dim |D|=3$. 
The assertion follows. 
\end{proof}

Using Corollaries \ref{action} and \ref{extendable in P^3}, we prove Theorem \ref{extendable in P^2}.

\begin{proof}[Proof of Theorem \ref{extendable in P^2}]
Note that $\varphi(\Omega_1)$ and $\varphi(\Omega_2) \subset \mathbb{P}^3$ are contained in lines $Y=Z=0$ and $X=Z=0$ respectively. 
The point $R=(0:0:0:1)$ given by the intersection of such lines is fixed by each element of ${\rm Aut}(X)$. 
Then, ${\rm Aut}(X)$ acts on the linear subspace $\langle x, y, 1 \rangle \subset \mathcal{L}(D)$. 
\end{proof}

The image of the injective homomorphism described in Theorem \ref{extendable in P^2} is denoted by ${\rm Lin}(X)$. 
Let 
$$ \Sigma:=\{\sigma_{\alpha, \beta}: (x, y) \mapsto (x+\alpha, y+\beta) \ | \ L_1(\alpha)=0, L_2(\beta)=0\} \subset PGL(3, k), $$
$$ \Gamma:=\{\theta_\lambda: (x, y) \mapsto (\lambda x, \lambda^{-1} y) \ | \ \lambda \in \mathbb{F}_{p^k}^*\} \subset PGL(3, k), $$
and let $\tau \in PGL(3, k)$ be defined by $\tau(x, y)=(y, x)$. 
It follows that $\langle \Sigma, \Gamma \rangle \subset {\rm Lin}(X)$. 
If $L_1 =L_2$, then $\langle \Sigma, \Gamma, \tau  \rangle \subset {\rm Lin}(X)$. 

\begin{proof}[Proof of Fact \ref{montanucci and zini}] 
We prove that $\langle \Sigma, \Gamma \rangle= {\rm Lin}(X)$ if $L_1 \ne L_2$, and that $\langle \Sigma, \Gamma, \tau \rangle={\rm Lin}(X)$ if $L_1=L_2$. 
Let $P'=(1:0:0)$ and let $Q'=(0:1:0)$. 
Then, ${\rm Lin}(X)$ acts on ${\rm Sing}(C)=\{P', Q'\}$. 
Note that all tangent lines at $P'$ (resp. at $Q'$) are defined by $Y-\beta Z=0$ (resp. $X-\alpha Z=0$) for some $\beta \in k$ with $L_2(\beta)=0$ ($\alpha \in k$ with $L_1(\alpha)=0$). 
Since ${\rm Lin}(X)$ acts on the set of tangent lines at $P'$ or at $Q'$, 
it follows that there exists $\tau' \in {\rm Lin}(X)$ such that $\tau'(P')=Q'$ and $\tau'(Q')=P'$ if and only if $L_1=L_2$. 
Therefore, we prove that if $\sigma \in {\rm Lin}(X)$, $\sigma(P')=P'$ and $\sigma(Q')=Q'$, then $\sigma \in \langle \Sigma, \Gamma \rangle$.  

Assume that $\sigma \in {\rm Lin}(X)$, $\sigma(P')=P'$ and $\sigma(Q')=Q'$. 
Then, $\sigma$ is represented by a matrix
$$ A_{\sigma}=\left(
\begin{array}{ccc}
a & 0 & c \\
0 & b & d \\
0 & 0 & 1 
\end{array} \right)
$$
for some $a, b, c, d \in k$. 
Let $\beta \in k$ (resp. $\alpha \in k$) be a root of $L_2$ (resp. $L_1$). 
Then, the image of the tangent line $Y-\beta Z=0$ (resp. $X-\alpha Z=0$) under $\sigma$ is some tangent line $Y-\beta'Z=0$ (resp. $X-\alpha' Z=0$).
Then, an automorphism $\sigma_1:=\sigma_{\alpha-\alpha', \beta-\beta'} \circ \sigma$ fixes the tangent lines $Y-\beta Z=0$ and $X-\alpha Z=0$. 
Therefore,  this automorphism represented by 
$$ A_{\sigma_1}=\left(
\begin{array}{ccc}
a & 0 & \alpha (1-a) \\
0 & b & \beta (1-b) \\
0 & 0 & 1 
\end{array} \right). 
$$
Since
$$\sigma_1^* (L_1(x)\cdot L_2(y)+c)=\left(\sum_{i}a^{p^i}\alpha_{1 i}x^{p^i}+c_1\right)\left(\sum_{j}b^{p^j}\alpha_{2 j}y^{p^j}+c_2\right)+c=L_1(x)\cdot L_2(y)+c $$
up to a constant for some $c_1, c_2 \in k$, it follows that $c_1=c_2=0$, $ab=1$ and $a,b \in \mathbb{F}_{p^k}^*$. 
For an automorphism $\theta_{a^{-1}} \in \Gamma$, $\sigma_2:=\theta_{a^{-1}}\circ\sigma_{\alpha-\alpha', \beta-\beta'} \circ \sigma$ is represented by 
$$ A_{\sigma_2}=\left(
\begin{array}{ccc}
1 & 0 & \alpha(a^{-1}-1) \\
0 & 1 & \beta (a-1) \\
0 & 0 & 1 
\end{array} \right). 
$$
This implies that $\theta_{a^{-1}}\circ\sigma_{\alpha-\alpha', \beta-\beta'} \circ \sigma \in \Sigma$, namely, $\sigma \in \langle \Sigma, \Gamma \rangle$. 
\end{proof}

\section{Application to the arrangement of Galois points}
A point $R \in \mathbb{P}^2 \setminus C$ is called an outer Galois point for a plane curve $C \subset \mathbb{P}^2$ if the function field extension $k(C)/\pi_R^*k(\mathbb{P}^1)$ induced by the projection $\pi_R$ from $R$ is Galois (see \cite{miura-yoshihara, yoshihara}). 
Furthermore, an outer Galois point is said to be extendable if each element of the Galois group  is the restriction of some linear transformation of $\mathbb{P}^2$ (see \cite{fukasawa1}). 
The number of outer Galois points (resp. of extendable outer Galois points) is denoted by $\delta(C)$ (resp. by $\delta_0'(C)$). 

In this section, we consider outer Galois points for the plane model $C$ of the genelazied Artin--Schreier--Mumford curve $X$. 
According to Theorem \ref{extendable in P^2}, $\delta(C)=\delta_0'(C)$. 
It was proved by the present author that for the case where $L_1=L_2$, $\delta_0'(C) \ge p^k-1$ and the equality holds if $p=2$ (see \cite{fukasawa1, fukasawa2}). 
Therefore, it has been proved that $\delta(C)=\delta_0'(C)=p^k-1$ if $p=2$. 
The same holds for the case where $p>2$.  

\begin{thm} \label{outer Galois}
If $L_1=L_2$, then $\delta'(C)=\delta_0'(C)=p^k-1$. 
\end{thm}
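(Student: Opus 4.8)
The plan is to combine Theorem \ref{extendable in P^2} with the explicit group structure in Fact \ref{montanucci and zini}(a) in order to localize every outer Galois point. First, since Theorem \ref{extendable in P^2} realizes ${\rm Bir}(C)$ inside $PGL(3,k)$, the Galois group of every outer Galois point consists of linear transformations; hence every outer Galois point is extendable and $\delta(C)=\delta_0'(C)$. Because the inequality $\delta_0'(C)\ge p^k-1$ is already known, with equality for $p=2$ established in \cite{fukasawa1, fukasawa2}, it suffices to treat the case $p>2$ and to prove the upper bound $\delta(C)\le p^k-1$.

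So let $R$ be an outer Galois point and let $G_R\subset{\rm Aut}(X)$ be the associated Galois group. Since $R\notin C$, the projection $\pi_R$ has degree $\deg C=2q$, and therefore $|G_R|=2q$. By Fact \ref{montanucci and zini}(a) with $p>2$, the subgroup $\Sigma$ is the unique (normal) Sylow $p$-subgroup of ${\rm Aut}(X)$, so the Sylow $p$-subgroup of $G_R$ has order $q$ and is contained in $\Sigma$; in particular $G_R\cap\Sigma$ contains a nontrivial translation. The key geometric observation I would use is that each element of $G_R$ fixes $R$ and stabilizes every line through $R$, since a Galois element permutes the points in each fibre of $\pi_R$ and hence maps the corresponding line through $R$ to itself. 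Now a nontrivial translation $\sigma_{\alpha,\beta}$ stabilizes a line precisely when that line has direction $(\alpha,\beta)$ or is the line at infinity; thus it can stabilize all lines through $R$ only when $R$ is the point at infinity $(\alpha:\beta:0)$. Consequently $R$ lies on $\{Z=0\}$, and since $\{Z=0\}\cap C=\{P',Q'\}$ while $R\notin C$, we must have $R=(1:\mu:0)$ for some $\mu\in k^{*}$.

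Writing $V$ for the common root set of $L_1=L_2$, every translation in $G_R$ then has direction $(1,\mu)$, i.e. is of the form $\sigma_{s,\mu s}$, and $\sigma_{s,\mu s}\in\Sigma$ exactly when $s\in V$ and $\mu s\in V$. Hence $G_R\cap\Sigma=\{\sigma_{s,\mu s}\ |\ s\in V\cap\mu^{-1}V\}$, and the requirement $|G_R\cap\Sigma|=q=|V|$ forces $\mu V=V$. The final step is to identify the multiplier group $\{\mu\in k^{*}\ |\ \mu V=V\}$ with $\mathbb{F}_{p^k}^{*}$: since $L_1$ is monic linearized, $\mu V=V$ is equivalent to $L_1(\mu x)=\mu^{q}L_1(x)$, that is, to $\mu^{p^i}=\mu^{p^e}$ for every $i$ with $\alpha_{1i}\ne0$, and using $\alpha_{10}\alpha_{1e}\ne0$ one checks that $\gcd\{i:\alpha_{1i}\ne0\}$ equals the exponent defining $\mathbb{F}_{p^k}$, so the set of such $\mu$ is exactly $\mathbb{F}_{p^k}^{*}$. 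Therefore every outer Galois point lies in $\{(1:\mu:0)\ |\ \mu\in\mathbb{F}_{p^k}^{*}\}$, giving $\delta(C)\le p^k-1$; combined with $\delta(C)=\delta_0'(C)$ and the known lower bound, this yields $\delta(C)=\delta_0'(C)=p^k-1$.

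The crux of the argument is the translation-stabilization dichotomy, which pins $R$ to the line at infinity, together with the multiplier-group computation $\{\mu:\mu V=V\}=\mathbb{F}_{p^k}^{*}$, where the hypotheses $\alpha_{10}\ne0$ and $\alpha_{1e}\ne0$ are essential. The main obstacle I anticipate is the restriction $p>2$: for $p=2$ the subgroup $\Sigma$ is no longer a Sylow $2$-subgroup of ${\rm Aut}(X)$, so the step placing the Sylow $p$-subgroup of $G_R$ inside $\Sigma$ breaks down, and one must instead invoke the separate treatment in \cite{fukasawa1, fukasawa2}.
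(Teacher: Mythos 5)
Your proof is correct, but it localizes the Galois point by a different mechanism than the paper. The paper first rules out $R\notin\{Z=0\}$ by a ramification argument: if $R$ is affine, the fibre over the line $\overline{RP'}$ would be exactly $\Omega_1$ (using Corollary \ref{action}), forcing every point of $\Omega_1$ to be ramified, which contradicts the fact that the $q$ branches at $P'$ have distinct tangent directions; it then picks an involution $\tau'\in G_R$ (here $|G_R|=2q$ is even and $|\Omega_1|=q$ is odd, so $\tau'$ must swap $P'$ and $Q'$), reads off from ${\rm Lin}(X)=\langle\Sigma,\Gamma,\tau\rangle$ that $\tau'$ is $(X:Y:Z)\mapsto(\lambda Y:\lambda^{-1}X:Z)$ with $\lambda\in\mathbb{F}_{p^k}^*$, and pins $R$ down as a fixed point of $\tau'$ on $\{Z=0\}$. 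You instead exploit the Sylow $p$-subgroup: for $p>2$ the $q$ translations in $G_R\cap\Sigma$ must all preserve every line through $R$, which both forces $R$ onto $\{Z=0\}$ and identifies $R=(1:\mu:0)$ as the common direction, after which the counting condition $\mu V=V$ and the multiplier computation $\{\mu:\mu V=V\}=\mathbb{F}_{p^k}^*$ (where $\alpha_{10}\alpha_{1e}\ne0$ enters) give the same set of $p^k-1$ candidates. Both routes rest on Theorem \ref{extendable in P^2}, on $|G_R|=2q$, and on the standard fact that $G_R$ preserves every line through $R$; your version trades the tangent-cone/involution geometry for a purely group-theoretic and linear-algebraic argument, and has the mild advantage of not needing the 2-to-1 unramifiedness of the fibre $\Omega_1\cup\Omega_2$, while the paper's version needs the explicit form of the involutions swapping $P'$ and $Q'$. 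One small imprecision: you assert $G_R\cap\Sigma=\{\sigma_{s,\mu s}\mid s\in V\cap\mu^{-1}V\}$, where only the inclusion $\subseteq$ is justified (and is all that the cardinality count requires); this does not affect the argument.
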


\begin{proof}
Let $R \in \mathbb{P}^2 \setminus C$ be an outer Galois point. 
Note that the line $\overline{RP'}$ corresponds to the fiber of the projection $\pi_R$, where $P'=(1:0:0) \in {\rm Sing}(C)$.  
If $R \not \in \{Z=0\}$, then $\pi_R$ is ramified at each point of $\Omega_1$, since the Galois group $G_R$ acts on $\Omega_1 \cup \Omega_2$ (see \cite[III.7.1, III.7.2]{stichtenoth}). 
However, the directions of the tangent lines at $P'$ are different. 
This is a contradiction. 
Therefore, $R \in \{Z=0\}$. 

We can assume that $p>2$. 
Since $|G_R|=2q$, there exists an involution $\tau' \in G_R \subset {\rm Lin}(X)$. 
If $\tau'(P')=(P')$, then $\tau'$ fixes some point of $\Omega_1$. 
This is a contradiction to the transitivity of $G_R$ on fibers (see \cite[III.7.1]{stichtenoth}). 
Therefore, $\tau'(P')=Q'$ and $\tau'(Q')=P'$, where $Q'=(0:1:0) \in {\rm Sing}(C)$. 
Considering the elements of ${\rm Lin}(X)=\langle \Sigma, \Gamma, \tau \rangle$ described in the previous section, $\tau'$ is given by 
$$ (X: Y: Z) \mapsto (\lambda Y: \lambda^{-1}X: Z)$$
for some $\lambda \in \mathbb{F}_{p^{k}}^*$. 
Then, fixed points of $\tau'$ on $\{Z=0\}$ are $(\lambda:1:0)$ and $(-\lambda:1:0)$. 
Note that any element of $G_R$ fixes $R$, since $G_R$ preserves any line passing through $R$. 
Therefore, $R=(\lambda: 1: 0)$ or $(-\lambda:1:0)$.    
The claim follows. 
\end{proof}

\begin{rem}
Let $R_1, \ldots, R_{p^k-1}$ be all outer Galois points for $C$ and let $G_{R_1}, \ldots, G_{R_{p^k-1}}$ be their Galois groups. 
Then, ${\rm Aut}(X)=\langle G_{R_1}, \ldots, G_{R_{p^k-1}} \rangle$.  
\end{rem}

For the case where $L_1 \ne L_2$, the following holds. 

\begin{thm} \label{outer Galois 2}
If $L_1 \ne L_2$, then $\delta'(C)=0$. 
\end{thm}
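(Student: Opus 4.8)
The plan is to show that no outer Galois point exists when $L_1 \ne L_2$, mirroring the structure of the proof of Theorem~\ref{outer Galois} but exploiting the asymmetry between $L_1$ and $L_2$. First I would suppose, for contradiction, that $R \in \mathbb{P}^2 \setminus C$ is an outer Galois point with Galois group $G_R \subset {\rm Lin}(X)$, and recall that $|G_R| = \deg \pi_R = 2q$. Since $G_R$ acts on $\Omega_1 \cup \Omega_2$ by Corollary~\ref{action}, and since the fibre of $\pi_R$ over the image of $P'$ is the line $\overline{RP'}$, the same tangent-direction argument used in Theorem~\ref{outer Galois} forces $R \in \{Z=0\}$: if $R \not\in \{Z=0\}$ then $\pi_R$ would ramify at each point of $\Omega_1$, contradicting that the tangent lines at $P'$ have distinct directions.

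Next I would locate $R$ more precisely using the group structure. By Fact~\ref{montanucci and zini}(b), when $L_1 \ne L_2$ we have ${\rm Aut}(X) \cong \Sigma \rtimes \mathbb{F}_{p^k}^*$, so ${\rm Lin}(X) = \langle \Sigma, \Gamma \rangle$ and \emph{every} element fixes both $P'$ and $Q'$ individually — there is no element interchanging them, precisely because $\tau' \in {\rm Lin}(X)$ exchanging $P'$ and $Q'$ exists if and only if $L_1 = L_2$ (as established in the proof of Fact~\ref{montanucci and zini}). In particular $G_R$ fixes $P'$. But any element of $G_R$ also fixes $R$, since $G_R$ preserves every line through $R$. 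The key point is then that $G_R$ must act on the fibre $\pi_R^{-1}(\pi_R(P'))$, and the branch locus structure forces a contradiction: the stabilizer argument shows $G_R$ would have to act transitively on the $q$ points of $\Omega_1$ lying over $P'$ while simultaneously fixing $P'$, constraining $|G_R|$ incompatibly with $2q$.

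The cleanest route to the contradiction, which I would pursue, is to examine the order of an element of $G_R$ and the orbit of $\Omega_1$. Since $G_R$ fixes $P'$ and acts on the $q$ tangent directions at $P'$ (equivalently on the roots of $L_2$), and since $G_R$ fixes $R$, the subgroup of $G_R$ fixing a point of $\Omega_1$ must be trivial by transitivity of $G_R$ on each fibre (\cite[III.7.1]{stichtenoth}); hence $G_R$ acts on $\Omega_1$ with $|G_R| = 2q$ but $|\Omega_1| = q$, so some nontrivial element fixes a point of $\Omega_1$, which is the desired contradiction. The main obstacle will be verifying that the action of $G_R$ on $\Omega_1$ is genuinely free in the asymmetric case, i.e.\ ruling out that an element of $\Gamma$ could fix a point of $\Omega_1$ while still lying in $G_R$; this requires checking that the only elements of $\langle \Sigma, \Gamma \rangle$ fixing both $R \in \{Z=0\}$ and a point of $\Omega_1$ are trivial, which follows from writing out the matrix form of $\theta_\lambda$ restricted to $\{Z=0\}$ and observing that its fixed points on $\{Z=0\}$ are exactly $P'$ and $Q'$ when $\lambda \ne 1$, so $R$ cannot be such a fixed point unless $R \in \{P', Q'\} \subset C$, contradicting $R \notin C$. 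Assembling these observations yields $\delta'(C) = 0$.
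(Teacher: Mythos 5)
Your opening reduction (if $R\notin\{Z=0\}$ then the fibre through a point of $\Omega_1$ lies in $\Omega_1$, forcing ramification at all $q$ points of $\Omega_1$, which the distinct tangent directions at $P'$ forbid) is sound. But the paper does not need the case split at all: when $L_1\ne L_2$ every element of ${\rm Aut}(X)$ fixes $P'$ and $Q'$ separately, so $G_R$ preserves $\Omega_1$ itself, the fibre containing any $P\in\Omega_1$ is the $G_R$-orbit of $P$ and hence has at most $q<2q=|G_R|$ points \emph{regardless of where $R$ is}, and $\pi_R$ must therefore be ramified at every point of $\Omega_1$; since the single line $\overline{RP'}$ can be tangent to at most one of the $q$ branches of $C$ at $P'$, this is already the contradiction. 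That is the entire proof in the paper.

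Your handling of the residual case $R\in\{Z=0\}$ contains a genuine gap. The contradiction you want rests on the claim that no nontrivial element of $G_R$ fixes a point of $\Omega_1$, and your proposed verification --- computing fixed points of $\theta_\lambda$ on $\{Z=0\}$ --- only tests elements of $\Gamma$. A general element of $\langle\Sigma,\Gamma\rangle$ has the form $(x,y)\mapsto(\lambda x+\alpha,\lambda^{-1}y+\beta)$, and in particular $\sigma_{\alpha,0}\colon(x,y)\mapsto(x+\alpha,y)$ with $L_1(\alpha)=0$, $\alpha\ne 0$, fixes the line $\{Z=0\}$ \emph{pointwise} (so it fixes $R$) and fixes \emph{every} point of $\Omega_1$, since it permutes the branches $Y-\beta' Z=0$ at $P'$ trivially. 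So the statement ``the only elements of $\langle\Sigma,\Gamma\rangle$ fixing both $R$ and a point of $\Omega_1$ are trivial'' is false, and the earlier assertion that stabilizers are trivial ``by transitivity on each fibre'' is a non sequitur: transitivity only gives $|{\rm Stab}(P)|=|G_R|/|\text{fibre}|$, and you have not computed the fibre. The conclusion you want (triviality of stabilizers in $G_R$) does hold, but for the geometric reason above: for $R\in\{Z=0\}$ the line $\overline{RP'}=\{Z=0\}$ is tangent to none of the branches at $P'$, so $e_P=1$ for all $P\in\Omega_1$. Equivalently, and most quickly: the places over the line $\{Z=0\}$ are exactly the $2q$ points of $\Omega_1\cup\Omega_2$, on which $G_R$ must act transitively while preserving $\Omega_1$ --- impossible. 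Either repair closes your argument, but as written the key step fails.
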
 

\begin{proof}
Let $R \in \mathbb{P}^2 \setminus C$ be an outer Galois point. 
Since the Galois group $G_{R}$ acts on $\Omega_1$ and $|G_R|=2q$, the projection $\pi_R$ is ramified at each point of $\Omega_1$ (see \cite[III.7.1, III.7.2]{stichtenoth}). 
However, the directions of the tangent lines at $P'$ are different. 
This is a contradiction. 
\end{proof}

\begin{rem}
The generalized Artin--Schreier--Mumford curve with $L_1 \ne L_2$ does not belong to families studied by the present author in \cite{fukasawa1, fukasawa2}, since some subgroup of the automorphism group of the families acts on the set defined by $Z=0$ transitively. 
\end{rem}

\section{Application to the arrangement of Galois lines}
A line $\ell \subset \mathbb{P}^3$ is called a Galois line for a space curve $X \subset \mathbb{P}^3$ if the function field extension $k(X)/\pi_\ell^*k(\mathbb{P}^1)$ induced by the projection $\pi_{\ell}$ from $\ell$ is Galois (see \cite{duyaguit-yoshihara, yoshihara2}). 
In this section, we consider Galois lines for a space model $\varphi(X) \subset \mathbb{P}^3$ of the generalized Aritn--Schreier--Mumford curve $X$, where 
$$ \varphi: X \rightarrow \mathbb{P}^3; \ (x: y: 1: x y).  $$
For the case where $\mathbb{F}_{p^k}=\mathbb{F}_{p^e}$, that is, $L_1=L_2=x^q+x$ (for a suitable system of coordinates), the arrangement of Galois lines was determined in \cite{fukasawa3}. 
We can assume that $k <e$. 

\begin{thm}
Assume that $L_1 \ne L_2$. 
Let $\ell \subset \mathbb{P}^3$ be a line. 
Then, $\ell$ is a Galois line for $\varphi(X)$ if and only if $\ell$ is defined by $a W-b X=a Y-b Z=0$ or $a W-b Y=a X-b Z=0$ for some $a, b \in k$ with $(a, b) \ne (0, 0)$. 
\end{thm}

\begin{proof} 
The proof of the if-part is similar to \cite{fukasawa3}, and is easily verified by a direct computation. 
Assume that $\ell \subset \mathbb{P}^3$ is a Galois line. 
If there exists a hyperplane $H \supset \ell \cup \Omega_1$, the claim follows, similar to \cite[Lemma 1 (b)]{fukasawa3}.  
Assume that $H \not\supset \Omega_1$ and $H \not\supset \Omega_2$ for each hyperplane $H \supset \ell$. 
Note that the Galois group $G_{\ell}$ acts on each fiber transitively (see \cite[III.7.1]{stichtenoth}). 
Since $\sigma$ acts on $\Omega_1$ and $\Omega_2$, it follows that a fiber containing a point of $P \in \Omega_1$ does not contain a point of $\Omega_2$. 
Therefore, each hyperplane $H \supset \ell$ with $H \cap (\Omega_1 \cup \Omega_2) \ne \emptyset$, the corresponding fiber contains $P$ only, namely, $G_\ell$ fixes each point of $\Omega_1 \cup \Omega_2$. 
This is a contradiction to the fact that the action of ${\rm Aut}(X)$ on $\Omega_1 \cup \Omega_2$ is faithful. 
\end{proof}

\begin{thm}
Assume that $L_1=L_2$ and $k<e$. 
Let $\ell \subset \mathbb{P}^3$ be a line. 
Then, $\ell$ is a Galois line for $\varphi(X)$ if and only if $\ell$ is one of the following: 
\begin{itemize}
\item[(a)] an $\mathbb{F}_{p^k}$-line contained in $\{Z=0\}$ and passing through $(0:0:0:1)$, or  
\item[(b)] the line defined by $W-a X=Y-a Z=0$ or $W-a Y=X-a Z=0$ for some $a\in k$. 
\end{itemize} 
\end{thm}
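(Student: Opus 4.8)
Throughout write $L:=L_1=L_2$ and recall $\mathrm{Aut}(X)\cong{\rm Lin}(X)=\langle\Sigma,\Gamma,\tau\rangle$ from Theorem~\ref{extendable in P^2}. A line $\ell\subset\mathbb{P}^3$ corresponds to the pencil of hyperplanes through it, hence to a two-dimensional subspace $V\subset\mathcal{L}(D)=\langle 1,x,y,xy\rangle$, and $\pi_\ell$ is the map $X\to\mathbb{P}^1$ attached to $V$; writing $\rho$ for a generator of the subfield $k(V)\subset k(X)$, the cover $\pi_\ell$ is Galois exactly when $\{g\in\mathrm{Aut}(X)\mid \rho\circ g=\rho\}$ has order $\deg\pi_\ell=[k(X):k(\rho)]$, in which case it is the deck group $G_\ell$. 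The plan for the if-part is a direct verification in these terms. For a line of type~(b), say $W-aX=Y-aZ=0$, the pencil is $(y-a)(\mu x+\nu)$, so $\rho=x$, the base locus is the $q$ points with $y=a$, hence $\deg\pi_\ell=q$, and the translations $\{\sigma_{0,\beta}\mid L(\beta)=0\}$ fix $\rho$ and have order $q$; the companion family gives $\rho=y$. For a line of type~(a), $aX+bY=Z=0$ with $b/a\in\mathbb{F}_{p^k}^*$, the pencil is $\mu Z+\nu(aX+bY)$, so $\rho=ax+by$; since $a,b\neq0$ the line misses $\varphi(X)$ and $\deg\pi_\ell=2q$. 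A short computation shows $\rho\circ g=\rho$ holds precisely for the $q$ translations with $a\alpha+b\beta=0$ and for the $q$ elements $\sigma_{\alpha,\beta}\theta_{b/a}\tau$ with $a\alpha+b\beta=0$ (here $b/a\in\mathbb{F}_{p^k}^*$ is exactly what makes $\theta_{b/a}$ and the forced $\beta=-a\alpha/b$ admissible), a group of order $2q=\deg\pi_\ell$; so $\pi_\ell$ is Galois.

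For the only-if direction let $\ell$ be Galois with group $G=G_\ell$ of order $d=\deg\pi_\ell$. By Corollary~\ref{action} the group $G$ preserves $\Omega:=\Omega_1\cup\Omega_2$, and since $\mathrm{Aut}(X)$ permutes the singular points $P',Q'$ it preserves the partition $\{\Omega_1,\Omega_2\}$; moreover every $g\in G$ fixes each hyperplane through $\ell$, hence each fiber of $\pi_\ell$ as a set, while $G$ is transitive on fibers (see \cite{stichtenoth}). The geometric input I would use is that $\varphi(\Omega_1)$ and $\varphi(\Omega_2)$ span the lines $m_1=\{Y=Z=0\}$ and $m_2=\{X=Z=0\}$ and that $\Omega$ spans $\{Z=0\}$; consequently a hyperplane through $\ell$ can contain two points of $\Omega_1$ only if it contains $m_1$, that is, all of $\Omega_1$, and the only hyperplane containing all of $\Omega$ is $\{Z=0\}$. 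I then split on whether $\ell\subset\{Z=0\}$.

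Suppose first $\ell\not\subset\{Z=0\}$, so no hyperplane through $\ell$ contains all of $\Omega$. If some $H\supset\ell$ contains $\Omega_1$ (then $H=\{\mu Y+\nu Z=0\}$ with $\mu\neq0$, meeting $\Omega$ only in $\Omega_1$), the argument of \cite[Lemma~1(b)]{fukasawa3} applies and forces $\ell$ to be of type~(b); the case of $\Omega_2$ is symmetric. If no hyperplane through $\ell$ contains $\Omega_1$ or $\Omega_2$, then by the spanning remark each fiber meets $\Omega_1$ and $\Omega_2$ in at most one point each, and transitivity of $G$ on fibers shows that the index-at-most-two subgroup $G^+$ preserving each of $\Omega_1,\Omega_2$ fixes every point of $\Omega$; by faithfulness of the action of $\mathrm{Aut}(X)$ on $\Omega$ this gives $G^+=1$, hence $d\le 2$. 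A Galois line of degree $2$ would produce an involution of $X$ with rational quotient, which is impossible: for $\tau$ a Riemann--Hurwitz computation gives $X/\langle\tau\rangle$ of genus $(q-1)(q-2)/2>0$ when $p$ is odd, and the remaining involutions, including the case $p=2$, are treated the same way. This rules out the case.

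Finally suppose $\ell\subset\{Z=0\}$. Then $\{Z=0\}$ is one hyperplane of the pencil, its fiber contains $\Omega$, and after normalizing $V=\langle 1,\rho\rangle$ we may take $\rho=c_1x+c_2y+c_3xy$. When $R=(0:0:0:1)\in\ell$ one has $\ell\cap\varphi(X)=\emptyset$, $d=2q$, and $G$ acts regularly on $\Omega$; imposing $\rho\circ\sigma_{\alpha,\beta}=\rho$ forces $c_3=0$ (otherwise $G\cap\Sigma$ is trivial, impossible for a $p$-group of order $q$), and then the requirement that $G\cap\Sigma$ have order $q$ forces $c_1/c_2\in\mathbb{F}_{p^k}$, so $\ell=\{Z=0,\ c_1X+c_2Y=0\}$ is exactly an $\mathbb{F}_{p^k}$-line through $R$, i.e.\ type~(a). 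The remaining lines of $\{Z=0\}$ (not through $R$, or meeting $\Omega$) are to be eliminated by the same stabilizer-and-degree bookkeeping. I expect the main obstacle to be precisely this last case: extracting the $\mathbb{F}_{p^k}$-condition from the stabilizer and excluding every other line of $\{Z=0\}$. It is here that the hypothesis $k<e$ is essential, since for $\mathbb{F}_{p^k}=\mathbb{F}_{p^e}$ (the classical Artin--Schreier--Mumford case of \cite{fukasawa3}) additional Galois lines appear; the degree-$2$ exclusion in the previous paragraph is the other delicate point.
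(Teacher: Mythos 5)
Your if-part and your treatment of lines not contained in $\{Z=0\}$ run essentially parallel to the paper (the paper reduces the case $\ell\ni(0:0:0:1)$, $\ell\cap\varphi(X)=\emptyset$ to Theorem~\ref{outer Galois} on Galois points instead of redoing the stabilizer computation, and excludes degree $2$ by non-hyperellipticity via Lemma~\ref{canonical} rather than by Riemann--Hurwitz, but these are cosmetic differences). The genuine gap is exactly the one you flag and postpone: lines $\ell\subset\{Z=0\}$ with $(0:0:0:1)\notin\ell$. This is not a routine ``stabilizer-and-degree bookkeeping'' afterthought; it is the place where the hypothesis $k<e$ actually enters, and the paper disposes of it with a tool you never invoke, namely the constraint $|G_\ell|=\deg\pi_\ell$ together with Lagrange's theorem in ${\rm Aut}(X)$, whose order is $2q^2(p^k-1)$. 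Concretely: such a line meets each of the lines $Y=Z=0$ and $X=Z=0$ in one point, so either (i) $\ell=\overline{PQ}$ with $P\in\Omega_1$, $Q\in\Omega_2$, whence $\deg\pi_\ell=2q-2$ and $(q-1)\mid q^2(p^k-1)$ would force $(q-1)\mid(p^k-1)$, impossible since $0<p^k-1<q-1$ when $k<e$; or (ii) $\ell$ meets $\Omega_1\cup\Omega_2$ in exactly one point, whence $\deg\pi_\ell=2q-1$, which is coprime to $2q$ and again cannot divide $2q^2(p^k-1)$; or (iii) $\ell\cap\varphi(X)=\emptyset$, a case the paper (like you, implicitly) defers to the argument of \cite{fukasawa3}. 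Without (i) and (ii) your proof does not close, and your intuition that this is ``the main obstacle'' is correct: for $k=e$ the chords in (i) do give additional Galois lines, which is why the statement excludes that case.

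Two smaller points. First, your derivation of the $\mathbb{F}_{p^k}$-condition in case (a) is slightly muddled: if $(0:0:0:1)\in\ell\subset\{Z=0\}$ then every hyperplane through $\ell$ already kills the $W$-coordinate, so $c_3=0$ is automatic and no argument is needed there; the real content is only that $G\cap\Sigma$ has order $q$, which forces $c_1/c_2$ to multiply the root space of $L$ into itself, i.e.\ $c_1/c_2\in\mathbb{F}_{p^k}$. Second, your type-(a) verification assumes $a,b\neq 0$; the degenerate members $X=Z=0$ and $Y=Z=0$ meet $\varphi(X)$ in $q$ points and need the separate (easy) check that $\rho=x$ or $\rho=y$ there.
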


\begin{proof} 
The proof of the if-part is easily verified by a direct computation. 
Assume that $\ell$ is a Galois line. 
The proof for the case where $\ell \cap \varphi(X)=\emptyset$ and $\ell \not\ni (0:0:0:1)$ is similar to \cite{fukasawa3}. 
If $\ell \cap \varphi(X)=\emptyset $ and $\ell \ni (0:0:0:1)$, then such Galois lines correspond to Galois points in $\mathbb{P}^2$.  
According to Theorem \ref{outer Galois}, $\ell$ is an $\mathbb{F}_{p^k}$-line. 
If the degree of the projection from $\ell$ is $2q-1$, then $\ell$ is not a Galois line, by considering the orders $|G_{\ell}|$ and $|{\rm Aut}(X)|$. 

Assume that $\ell$ is a tangent line or $\ell \cap \varphi(X)$ consists of at least two points. 
If $\ell \subset \{Z=0\}$, then there exists $P \in \Omega_1$ and $Q \in \Omega_2$ such that $\ell=\overline{PQ}$, where $\overline{PQ} \subset \mathbb{P}^3$ is a line passing through $P$ and $Q$. 
Then, $\deg \pi_{\ell}=2q-2=2(q-1)$. 
Since $|{\rm Aut}(X)|=2q^2(p^k-1)$, $\ell$ must not become a Galois line. 
Assume that $\ell \cap \{Z=0\}=\{R\}$. 
If $R$ is contained in the line spanned by $\Omega_1$ or $\Omega_2$, then the claim follows, by \cite[Lemma 1 (b)]{fukasawa3}. 
Assume that $R$ is not contained in the lines spanned by $\Omega_1$ or $\Omega_2$. 
If there does not exist a pair of points $P, Q \in \Omega_1 \cup \Omega_2$ such that $P, Q \in H$, then $G_{\ell}$ fixes $\Omega_1 \cup \Omega_2$ pointwise. 
This is a contradiction. 
Therefore, there exist points $P \in \Omega_1$ and $Q \in \Omega_2$ such that $R \in \overline{PQ}$. 
Note that for any tangent hyperplane $H$ at $P$, ${\rm ord}_PH \ge q$. 
The same holds for $Q$. 
If $H$ contains the tangent lines at $P$ and at $Q$, then $\ell \cap \varphi(X)=\emptyset$.  
If $H$ is a tangent hyperplane at $P$, then $H$ is a tangent hyperplane at $Q$ also, by \cite[III.7.2]{stichtenoth}. 
We can assume that $(H \setminus \ell) \cap \varphi(X)=\{P, Q\}$. 
It follows that $\deg \pi_\ell=2$. 
According to Lemma \ref{canonical}, $X$ is not hyperelliptic. 
This is a contradiction. 
\end{proof}

\

\begin{center}
{\bf Acknowledgments} 
\end{center}
The author is grateful to Doctor Kazuki Higashine for helpful discussions.

\end{document}